\theoremstyle{plain} \numberwithin{equation}{section}
\newtheorem{thm}{Theorem}[section]
\newtheorem{theorem}[thm]{Theorem}
\newtheorem{lemma}[thm]{Lemma}
\newtheorem{corollary}[thm]{Corollary}
\begin{document}
\setcounter{page}{1}

\author[Padhan]{Rudra Narayan Padhan}
\address{Centre for Data Science, Institute of Technical Education and Research  \\
	Siksha `O' Anusandhan (A Deemed to be University)\\
	Bhubaneswar-751030 \\
	Odisha, India}
\email{rudra.padhan6@gmail.com, rudranarayanpadhan@soa.ac.in}

\title[ Hasan and Padhan]{On the dimension of the Schur multiplier of nilpotent
	Lie superalgebras}

\author{IBRAHEM YAKZAN HASAN}
\address{Centre for Data Science, Institute of Technical Education and Research \\
	Siksha `O' Anusandhan (A Deemed to be University)\\
	Bhubaneswar-751030 \\
	Odisha, India}
\email{ibrahemhasan898@gmail.com}

\subjclass[2020]{Primary 17B10, Secondary 17B01.}
\keywords{Nilpotent Lie superalgebra, Schur multiplier, capability}
\maketitle

\begin{abstract}
We provide a bound on the dimension of Schur multiplier of a finite dimensional nilpotent Lie superalgebra which is more precise than the previous bounds on the dimension of Schur multiplier of  Lie superalgebra.
\end{abstract}

\section{Introduction}

The theory of Lie Superalgebras has applications in many areas of Mathematics and Theoretical Physics as it can be used to describe supersymmetry. Kac \cite{Kac1977} gives a comprehensive description of the mathematical theory of Lie superalgebras, and establishes the classification of all finite dimensional simple Lie superalgebras over an algebraically closed field of characteristic zero. In the last decade, the theory of Lie superalgebras has evolved remarkably, obtaining many results in representation theory and classification. Most of the results are extensions of well known facts of Lie algebras. In 1904, I. Schur introduced the Schur multiplier and cover of a group in his work on projective representation. Batten \cite{Batten1993} introduced and  investigated  Schur multiplier and cover of a Lie algebra and several authors later followed, see\cite{BS1996,Batten1996,Ellis1991,Ellis1995,Ellis1996,Hardy1998,Hardy2005,org,Kar1987,Moneyhun1994,N2,Niroomand2011,Russo2011,PMF2013}. For a finite dimensional Lie algebra $L$ over a field $\mathbb{F}$  the free presentation of $L$ is $0 \longrightarrow R\longrightarrow F \longrightarrow L \longrightarrow 0$, where $F$ is a free Lie algebra and $R$ is an ideal of $F$. Then the Schur multiplier $\mathcal{M}(L)$ is isomorphic to $F^2\cap R/[F,R]$. Niroomand and Russo \cite{Niroomand2011} proved that for a $c$-step nilpotent Lie algebra $L$ with $\dim L =n$ and $\dim L^2= m\geq1$,  then
$\dim(\mathcal{M}(L))\leq \frac{1}{2}[(n+m-2)(n-m-1)+1.$ Recently, Rai \cite{Rai}improved this bounds, i.e., 
\begin{equation}\label{eq01}
\dim(\mathcal{M}(L))\leq \frac{1}{2}[(n+m)(n-m-1)-\sum_{i=2}^{\min\{c,n-m\}}(n-m-i).
\end{equation}
We note here that this bound also follows from [6, Theorem 1], where Gholami and Saeedi obtained a bound for the dimension of the Schur multiplier of a nilpotent $n$-Lie algebra. The study of Schur multiplier for Lie superalgebra has  recently been started \cite{GKL2015,hasanpadprad,p54,p55,p50,Padhandetec,hesam,ph57}.
Recently, the bound of Niroomand and Russo are analyzed by Nayak as,  $\dim(\mathcal{M}(L))\leq \frac{1}{2}[(n+m+r+s-2)(n+m-r-s-1)]+n+1,$
for a nilpotent Lie superalgebra of dimension $(m\mid n)$ and $\dim L^2=(r\mid s)$ with $r+s\geq1$. In this paper, we find an analogous bound to (\ref{eq01}), improving the bound of Nayak.

\section{Preliminaries}

Let $\mathbb{Z}_{2}=\{\bar{0}, \bar{1}\}$ be a field. A $\mathbb{Z}_{2}$-graded vector space $V$ is simply a direct sum of vector spaces $V_{\bar{0}}$ and $V_{\bar{1}}$, i.e., $V = V_{\bar{0}} \oplus V_{\bar{1}}$. It is also known as  superspace. We consider all vector superspaces and superalgebras are over the field $\mathbb{F}$ (characteristic of $\mathbb{F} \neq 2,3$). Elements in $V_{\bar{0}}$ (resp. $V_{\bar{1}}$) are called even (resp. odd) elements. Non-zero elements of $V_{\bar{0}} \cup V_{\bar{1}}$ are called homogeneous elements. For a homogeneous element $v \in V_{\sigma}$, with $\sigma \in \mathbb{Z}_{2}$ we set $|v| = \sigma$ as the degree of $v$. A  subsuperspace (or, subspace)  $U$ of $V$ is a $\mathbb{Z}_2$-graded vector subspace where  $U= (V_{\bar{0}} \cap U) \oplus (V_{\bar{1}} \cap U)$. We adopt the convention that whenever the degree function appears in a formula, the corresponding elements are supposed to be homogeneous. 

\smallskip 

A Lie superalgebra (see \cite{Kac1977, Musson2012}) is a superspace $L = L_{\bar{0}} \oplus L_{\bar{1}}$ with a bilinear mapping $ [., .] : L \times L \rightarrow L$ satisfying the following identities:
\begin{enumerate}
\item $[L_{\alpha}, L_{\beta}] \subset L_{\alpha+\beta}$, for $\alpha, \beta \in \mathbb{Z}_{2}$ ($\mathbb{Z}_{2}$-grading),
\item $[x, y] = -(-1)^{|x||y|} [y, x]$ (graded skew-symmetry),
\item $(-1)^{|x||z|} [x,[y, z]] + (-1)^{ |y| |x|} [y, [z, x]] + (-1)^{|z| |y|}[z,[ x, y]] = 0$ (graded Jacobi identity),
\end{enumerate}
for all $x, y, z \in L$. It is clear that the  graded Jacobi identity  is equivalent to the equation 
\begin{equation}\label{eq22}
[x,[y, z]]=  [[x,y], z] + (-1)^{|z| |x+y|}[[z,x],y]. 
\end{equation} 
Clearly $L_{\bar{0}}$ is a Lie algebra, and $L_{\bar{1}}$ is a $L_{\bar{0}}$-module. If $L_{\bar{1}} = 0$, then $L$ is just a Lie algebra, but in general, a Lie superalgebra is not a Lie algebra.  A Lie superalgebra $L$ is called abelian if  $[x, y] = 0$ for all $x, y \in L$. Lie superalgebras without even part, i.e., $L_{\bar{0}} = 0$, are  abelian. A subsuperalgebra (or subalgebra) of $L$ is a $\mathbb{Z}_{2}$-graded vector subspace that is closed under bracket operation. The graded subalgebra $[L, L]$, of $L$, is known as the derived subalgebra of $L$. A $\mathbb{Z}_{2}$-graded subspace $I$ is a graded ideal of $L$ if $[I, L]\subseteq I$. The ideal 
\[Z(L) = \{z\in L : [z, x] = 0\;\mbox{for all}\;x\in L\}\] 
is a graded ideal and it is called the {\it center} of $L$. A homomorphism between superspaces $f: V \rightarrow W $ of degree $|f|\in \mathbb{Z}_{2}$, is a linear map satisfying $f(V_{\alpha})\subseteq W_{\alpha+|f|}$ for $\alpha \in \mathbb{Z}_{2}$. In particular, if $|f| = \bar{0}$, then the homomorphism $f$ is called the homogeneous linear map of even degree. A Lie superalgebra homomorphism $f: L \rightarrow M$ is a  homogeneous linear map of even degree such that $f([x,y]) = [f(x), f(y)]$ holds for all $x, y \in L$.  If $I$ is an ideal of $L$, the quotient Lie superalgebra $L/I$ inherits a canonical Lie superalgebra structure such that the natural projection map becomes a homomorphism. The notions of {\it epimorphisms, isomorphisms,} and {\it automorphisms} have obvious meaning. The descending central sequence of a Lie superalgebra $L = L_{\bar{0}} \oplus L_{\bar{1}}$ is defined by $L^{1} = L, L^{k+1} = [L^{k}, L]$ for all $k\geq 1$. If $L^{k+1} = \{0\}$ for some $k+1$, with $L^{k} \neq 0$, then $L$ is called nilpotent with nilpotency class $k$. By $\gamma_i(L)$  we denote to the $L^{k}$.
\smallskip

Throughout this article, for the superdimension of Lie superalgebra $L$, we simply write $\dim L=(m\mid n)$ , where $\dim L_{\bar{0}} = m$ and $\dim L_{\bar{1}} = n$. An abelian Lie superalgebra with $\dim A=(m\mid n)$ is denoted by $A(m \mid n)$.  Now we state some useful known results for further use.

\smallskip

\begin{lemma}\label{th3.3}\cite[See Theorem 3.4]{Nayak2018}
\[\dim \mathcal{M}(A(m \mid n)) = \big(\frac{1}{2}(m^2+n^2+n-m)\mid mn \big).\]
\end{lemma}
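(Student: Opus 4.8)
The plan is to compute $\mathcal{M}(A(m\mid n))$ directly from a free presentation and then perform a graded dimension count. First I would fix a homogeneous basis $e_1,\dots,e_m$ of $A(m\mid n)_{\bar 0}$ and $f_1,\dots,f_n$ of $A(m\mid n)_{\bar 1}$, let $F$ be the free Lie superalgebra on these $m+n$ generators, and take the canonical epimorphism $\pi\colon F\to A(m\mid n)$ sending each generator to the corresponding basis vector. Since $A(m\mid n)$ is abelian, every bracket of generators lies in $R=\ker\pi$, so $F^{2}\subseteq R$; conversely, the generators map to a basis and the abelianization $F/F^{2}$ is free on them, whence $R=F^{2}$. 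Substituting into the defining isomorphism $\mathcal{M}(L)\cong(F^{2}\cap R)/[F,R]$ then gives
\[
\mathcal{M}(A(m\mid n))\;\cong\;\frac{F^{2}}{[F,F^{2}]}\;=\;\frac{F^{2}}{F^{3}}.
\]

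Next I would identify $F^{2}/F^{3}$ with the degree-two homogeneous component of the free Lie superalgebra, i.e.\ the universal graded skew-symmetric square of the generating superspace $U=U_{\bar 0}\oplus U_{\bar 1}$ with $\dim U=(m\mid n)$. The point here is that $F^{2}/F^{3}$ is spanned by the brackets $[u,v]$ of generators, and the only relations among these in degree two are those forced by graded skew-symmetry: the graded Jacobi identity (\ref{eq22}) couples three brackets and therefore contributes nothing below degree three.

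I would then carry out a parity-separated count of a basis of this square. Bracketing two even generators is antisymmetric, so $\{[e_i,e_j]:i<j\}$ is an even basis of size $\binom{m}{2}$, while $[e_i,e_i]=0$ because $\operatorname{char}\mathbb{F}\neq 2$. Bracketing an even with an odd generator is again antisymmetric, giving $mn$ independent \emph{odd} elements $[e_i,f_j]$. The decisive case is the odd--odd brackets, where graded skew-symmetry reads $[f_i,f_j]=-(-1)^{|f_i||f_j|}[f_j,f_i]=[f_j,f_i]$, so they are \emph{symmetric} rather than alternating; hence $\{[f_i,f_j]:i\le j\}$, including the diagonal terms $[f_i,f_i]$ which need not vanish, is an even basis of size $\binom{n+1}{2}$. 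Collecting the even and odd contributions yields
\[
\dim\mathcal{M}(A(m\mid n))=\Big(\tbinom{m}{2}+\tbinom{n+1}{2}\ \big|\ mn\Big)=\Big(\tfrac12(m^{2}+n^{2}+n-m)\ \big|\ mn\Big).
\]

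The step I expect to be the main obstacle is the super-sign bookkeeping in the third paragraph: one must verify carefully that the odd--odd component behaves as a symmetric square $S^{2}(U_{\bar 1})$ (contributing $\binom{n+1}{2}$, with a nonzero diagonal) rather than as an exterior square. This is precisely the feature that distinguishes the result from the classical Lie-algebra value $\dim\mathcal{M}(A)=\binom{n}{2}$, and it is where an error in a sign or in the treatment of $[f_i,f_i]$ would change the final formula. Everything else reduces to the standard identification $R=F^{2}$ in the abelian case together with the vanishing of Jacobi relations below degree three.
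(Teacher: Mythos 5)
Your proposal is correct, and it is essentially the argument behind the result: the paper itself gives no proof of this lemma, citing instead Nayak (Theorem 3.4 of the reference \cite{Nayak2018}), where the computation for abelian Lie superalgebras comes down to exactly the same identification of $\mathcal{M}(A(m\mid n))$ with the graded skew-symmetric square $\Lambda^2 U_{\bar 0}\oplus (U_{\bar 0}\otimes U_{\bar 1})\oplus S^2 U_{\bar 1}$ of the underlying superspace. Your reduction $R=F^2$, the isomorphism $\mathcal{M}(A)\cong F^2/F^3$, and the parity-separated count $\binom{m}{2}+\binom{n+1}{2}$ even and $mn$ odd (with the odd--odd brackets correctly treated as symmetric, including nonzero diagonal terms) are all sound under the paper's standing assumption $\operatorname{char}\mathbb{F}\neq 2,3$.
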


\begin{lemma}
Let $L$ be a finite dimensional $c$-step nilpotent Lie superalgebra and let $0 \longrightarrow R\longrightarrow F \longrightarrow L \longrightarrow 0$ be a free presentation of the Lie superalgebra $L$. Then
\begin{equation}\label{eq2.1}
\dim(\gamma_c(L)) + \dim(\mathcal{M}(L)) = \dim(\mathcal{M}(L/\gamma_c(L))) + \dim
\frac{[\gamma_c(F) + R,F]}{[R, F]}
\end{equation}
	
	.
\end{lemma}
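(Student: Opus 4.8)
The plan is to reduce everything to the Hopf-type formula $\mathcal{M}(L)\cong (F^2\cap R)/[F,R]$ for a Lie superalgebra presented as $L=F/R$ (the graded analogue of the formula recalled in the introduction), and then to close by an elementary telescoping of superdimensions along suitable chains of graded ideals of $F$. Since $R$ and $\gamma_c(F)$ are graded ideals and every homomorphism in sight is even, all the sums, intersections and quotients below are graded; hence the identity may be verified on (super)dimensions directly, one graded component at a time.

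First I would identify a free presentation of the quotient. Because $L$ is $c$-step nilpotent one has $\gamma_c(L)=(\gamma_c(F)+R)/R$, so
\[
L/\gamma_c(L)\cong F/(\gamma_c(F)+R),
\]
which is again a free presentation of $L/\gamma_c(L)$ with relation ideal $S:=\gamma_c(F)+R$. Applying the Hopf formula to this presentation gives $\mathcal{M}(L/\gamma_c(L))\cong (F^2\cap S)/[F,S]$. I would then simplify both parts using $\gamma_c(F)\subseteq F^2$ (valid for $c\ge 2$): on the denominator side $[F,S]=[F,\gamma_c(F)]+[F,R]=\gamma_{c+1}(F)+[F,R]$, and on the numerator side the modular (Dedekind) law yields $F^2\cap(\gamma_c(F)+R)=\gamma_c(F)+(F^2\cap R)$. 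This rewrites $\mathcal{M}(L/\gamma_c(L))$ and the last term $[\gamma_c(F)+R,F]/[R,F]=(\gamma_{c+1}(F)+[F,R])/[F,R]$ purely in terms of the graded ideals $\gamma_c(F),\gamma_{c+1}(F),F^2\cap R$ and $[F,R]$.

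The remaining step is pure dimension bookkeeping. Using the inclusions $[F,R]\subseteq \gamma_{c+1}(F)+[F,R]\subseteq \gamma_c(F)+(F^2\cap R)$ (which follow from $\gamma_{c+1}(F)\subseteq\gamma_c(F)$ and $[F,R]\subseteq F^2\cap R$), the right-hand side of the claimed identity telescopes to $\dim\!\big((\gamma_c(F)+(F^2\cap R))/[F,R]\big)$. For the left-hand side I would combine $\mathcal{M}(L)\cong (F^2\cap R)/[F,R]$ with the isomorphism $\gamma_c(L)\cong \gamma_c(F)/(\gamma_c(F)\cap R)\cong (\gamma_c(F)+(F^2\cap R))/(F^2\cap R)$, the last step again invoking $\gamma_c(F)\subseteq F^2$ so that $\gamma_c(F)\cap(F^2\cap R)=\gamma_c(F)\cap R$; telescoping now along $[F,R]\subseteq F^2\cap R\subseteq \gamma_c(F)+(F^2\cap R)$ shows the left-hand side equals the same quantity, giving the desired equality.

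The only genuinely delicate points, and where I expect the main obstacle, are the two lattice identities $F^2\cap(\gamma_c(F)+R)=\gamma_c(F)+(F^2\cap R)$ and $\gamma_c(F)\cap(F^2\cap R)=\gamma_c(F)\cap R$: both rest on the inclusion $\gamma_c(F)\subseteq F^2$, and one must check that they hold in each graded component so that the whole argument is valid for superdimensions and not merely for total dimensions. Everything else is the graded counterpart of the classical Lie-algebra computation and should transfer verbatim once the super Hopf formula and the gradedness of all the ideals involved are in hand.
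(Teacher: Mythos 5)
Your proof is correct and takes essentially the same route as the paper's: the paper gives no inline argument (it defers to the proof of Theorem 3.6 in the cited work \cite{p54}), and that proof is precisely this Hopf-formula computation --- present $L/\gamma_c(L)$ as $F/(\gamma_c(F)+R)$, apply the modular (Dedekind) law via $\gamma_c(F)\subseteq F^2$, and telescope dimensions along $[F,R]\subseteq F^2\cap R\subseteq \gamma_c(F)+(F^2\cap R)$. Your caveat that $c\geq 2$ is required is also accurate (for $c=1$ the stated identity fails for any nonzero abelian $L$), and it is consistent with how the paper actually invokes the lemma, namely only when $\gamma_2(L)\neq 0$.
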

\begin{proof}
See the proof of [\cite{p54}, Theorem 3.6]).
\end{proof}

Now, similar to the Lie algebra case, we can here also define the epimorphism $$\lambda_c : \gamma_c(L) \otimes\frac{L}{\gamma_2(L)}\longrightarrow \frac{[\gamma_c(F) + R,F]}{[R, F]},$$
and the epimorphisms
$$\lambda_i: \frac{\gamma_i(L) }{\gamma_{i+1}(L)} \times\frac{L}{\gamma_2(L)}\longrightarrow \frac{[\gamma_i(F) + R,F]}{[\gamma_{i+1}(F) +R, F]}$$
for $2 \leq i \leq c-1.$  Thus

$$ \dim
\frac{[\gamma_c(F) + R,F]}{[R, F]}=\dim (\gamma_c(L) \otimes\frac{L}{\gamma_2(L)})- \dim (ker (\lambda_c)),$$
and 
$$ \dim
\frac{[\gamma_i(F) + R,F]}{[\gamma_{i+1}(F) +R, F]}=\dim (\frac{\gamma_i(L) }{\gamma_{i+1}(L)} \otimes\frac{L}{\gamma_2(L)})- \dim( ker (\lambda_i)),$$
for $2 \leq i \leq c-1.$
Then by Equation \ref{eq2.1}, we have
$$ \dim(\mathcal{M}(L)) = \dim(\mathcal{M}(L/\gamma_c(L)))- \dim(\gamma_c(L))  + \dim( \gamma_c(L) \otimes\frac{L}{\gamma_2(L)})- \dim (ker (\lambda_c))$$
i.e.,
\begin{equation}
 \dim(\mathcal{M}(L)) = \dim(\mathcal{M}(L/\gamma_c(L)))  + \dim(\gamma_c(L))(\dim (\frac{L}{\gamma_2(L)})-1) - \dim (ker (\lambda_c)).
\end{equation}

For $2 \leq i \leq c-1,$ observe that $0 \longrightarrow \gamma_i(L) + R \longrightarrow F \longrightarrow L/\gamma_i(L) \longrightarrow 0 $ is a free presentation of
$L/\gamma_i(L)$ and $L/\gamma_i(L)$ is an $(i-1)$-step nilpotent Lie superalgebra. Thus by repeating equation \ref{eq2.1}
and adding we get
\begin{equation}\label{eq2.4}
	 \dim(\mathcal{M}(L)) = \dim(\mathcal{M}(L/\gamma_2(L)))  + \dim(\gamma_2(L))(\dim (\frac{L}{\gamma_2(L)})-1) - \sum_{i=2}^{c}\dim (ker (\lambda_i)).
\end{equation}

\section{ The Main Results}

\begin{lemma}\label{lem3.1}
	Let $L$ be a Lie superalgebra. Then for any homogeneous elements $x_1, x_2, \ldots, x_{i+1} \in L,$
	
	$$	(-1)^{(|x_1|+\ldots+|x_{i-1}|) |x_{i+1}|}  [[x_1, \ldots, x_{i}]_l,x_{i+1}]+(-1)^{|x_{i+1}||x_i|}[[x_{i+1},[x_1, \ldots, x_{i-1}]_l],x_i]$$

	$$+(-1)^{(|x_{i+1}|+|x_i|)|x_{i-1}|}[[[x_i,x_{i+1}]_r,[x_1, \ldots, x_{i-2}]_l],x_{i-1}]$$

	$$+(-1)^{(|x_1|+ \ldots+ |x_{i-2}|)|x_{i-1}|}[[[x_{i-1},x_i,x_{i+1}]_r,[x_1, \ldots, x_{i-3}]_l],x_{i-2}]$$

	$$+(-1)^{(|x_1|+ \ldots+ |x_{i-3}|)(|x_{i-2}|+|x_{i-1}|)+(|x_{i+1}|+|x_i|)|x_{i-2}|}[[[x_{i-2},x_{i-1},x_i,x_{i+1}]_r,[x_1, \ldots, x_{i-4}]_l],x_{i-3}]$$

    $$+(-1)^{(|x_1|+ \ldots+ |x_{i-4}|)(|x_{i-3}|+|x_{i-2}|+|x_{i-1}|)+(|x_{i+1}|+|x_i|)(|x_{i-3}|+|x_{i-2}|)}[[[x_{i-3},\ldots,x_{i+1}]_r,[x_1,, \ldots, x_{i-5}]_l],x_{i-4}]$$
    
    $$+\ldots+(-1)^{|x_1|(|x_2|+ \ldots+ |x_{i-1}|)+(|x_{i+1}|+|x_i|)(|x_2|+\ldots|x_{i-2}|)}[[x_2,, \ldots, x_{i+1}]_r,x_1]$$
    
     $$+\{(-1)^{(|x_1|+ \ldots+ |x_{i-2}|)|x_i|}-(-1)^{|x_{i-1}||x_{i+1}|}\}(-1)^{(|x_1|+ \ldots+ |x_{i-2}|)|x_{i+1}|}[[x_1, \ldots, x_{i-1}]_l,[x_i,x_{i+1}]]=0$$
for $i \geq 3$, where
   $$[x_1,x_2,\ldots x_i]_r=[x_1,[\ldots[x_{i-2},[x_{i-1},x_i]]\ldots]$$
 and 
     $$[x_1,x_2,\ldots x_i]_l=[\ldots[[x_1,x_2],x_3],\ldots,x_i].$$
\end{lemma}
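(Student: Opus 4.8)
The plan is to prove the identity by starting from the first summand and repeatedly applying the reformulated graded Jacobi identity \eqref{eq22} together with graded skew-symmetry, so that consecutive summands telescope and the last summand, with its two-term coefficient, absorbs what remains. Throughout, write $P=[x_1,\ldots,x_{i-1}]_l$ and $p=|x_1|+\cdots+|x_{i-1}|$, and recall $[x_1,\ldots,x_i]_l=[P,x_i]$.

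First I would combine the first two summands. Since $[x_1,\ldots,x_i]_l=[P,x_i]$, the first summand is $(-1)^{p|x_{i+1}|}[[P,x_i],x_{i+1}]$. Applying \eqref{eq22} with $x=P$, $y=x_i$, $z=x_{i+1}$ gives
\[
[[P,x_i],x_{i+1}]=[P,[x_i,x_{i+1}]]-(-1)^{|x_{i+1}|(p+|x_i|)}[[x_{i+1},P],x_i],
\]
and multiplying through by $(-1)^{p|x_{i+1}|}$, the second bracket becomes $-(-1)^{|x_{i+1}||x_i|}[[x_{i+1},P],x_i]$ (using $(-1)^{2p|x_{i+1}|}=1$), which is exactly the negative of the second summand of the lemma. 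Hence the first two summands collapse to the single expression $(-1)^{p|x_{i+1}|}[[x_1,\ldots,x_{i-1}]_l,[x_i,x_{i+1}]]$, a bracket of the left-normed word of length $i-1$ with the compound element $[x_i,x_{i+1}]$.

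Next I would telescope the middle summands. Put $R_j=[x_j,\ldots,x_{i+1}]_r$ and $L_j=[x_1,\ldots,x_{j-2}]_l$, so that for $j$ running from $i$ down to $3$ the $j$-th middle summand is a scalar multiple $\mathrm{sign}_j\,[[R_j,L_j],x_{j-1}]$, while for $j=2$ it degenerates to $\mathrm{sign}_2\,[R_2,x_1]$, the endpoint of the process. Applying \eqref{eq22} to each bracket with $j\ge 3$ and using the growth rules $[L_j,x_{j-1}]=L_{j+1}$ and $[x_{j-1},R_j]=R_{j-1}$ turns $[[R_j,L_j],x_{j-1}]$ into $[R_j,L_{j+1}]-(-1)^{|x_{j-1}|(|R_j|+|L_j|)}[R_{j-1},L_j]$. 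The key point is that the peeled-off bracket $[R_{j-1},L_j]$ of the $j$-th summand matches the leading bracket of the $(j-1)$-th summand, so they cancel provided $\mathrm{sign}_{j-1}=\mathrm{sign}_j\,(-1)^{|x_{j-1}|(|R_j|+|L_j|)}$; checking that the exponents written in the statement do satisfy this relation is the main computational obstacle, and it is precisely the Koszul-sign bookkeeping in the stated coefficients. After the cancellations only the two endpoints survive, and a single graded skew-symmetry (using $R_i=[x_i,x_{i+1}]$ and $L_{i+1}=[x_1,\ldots,x_{i-1}]_l$) rewrites their sum as another scalar multiple of $[[x_1,\ldots,x_{i-1}]_l,[x_i,x_{i+1}]]$.

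By the two preceding steps the entire left-hand side is a scalar multiple of the single bracket $[[x_1,\ldots,x_{i-1}]_l,[x_i,x_{i+1}]]$, and that scalar is a sum of four signs: one from the first step, one from the surviving endpoint of the telescoping, and the two produced by the difference $(-1)^{(|x_1|+\cdots+|x_{i-2}|)|x_i|}-(-1)^{|x_{i-1}||x_{i+1}|}$ in the coefficient of the last summand. Substituting $p=(|x_1|+\cdots+|x_{i-2}|)+|x_{i-1}|$ and simplifying modulo $2$, the sign from the first step cancels one sign of the difference while the telescoping sign cancels the other, so the total scalar is $0$ and the identity follows. To anchor the argument I would first check the case $i=3$ directly, where the left-hand side is a sum of only five brackets and the pairwise cancellation of the four signs can be exhibited explicitly.
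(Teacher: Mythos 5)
Your proposal is correct, but it takes a genuinely different route from the paper. The paper proves the lemma by induction on $i$: the base case $i=3$ is obtained by adding two graded Jacobi identities and absorbing the mixed term $[[x_3,x_4],[x_1,x_2]]$ via skew-symmetry, and the inductive step substitutes $a_1=[x_1,x_2],\ a_2=x_3,\ldots,a_{j+1}=x_{j+2}$ into the identity of length $j$ and then splits the penultimate term $[[x_3,\ldots,x_{j+2}]_r,[x_1,x_2]]$ with Equation \eqref{eq22} to regenerate the statement for $j+1$. You instead do everything in one pass: your collapse of the first two summands to $(-1)^{p|x_{i+1}|}[P,[x_i,x_{i+1}]]$ is verified correctly, and the telescoping of the middle chain via the growth rules $[L_j,x_{j-1}]=L_{j+1}$, $[x_{j-1},R_j]=R_{j-1}$ is exactly the right mechanism. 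What each approach buys: the paper's induction generates the (quite opaque) coefficients mechanically, at the cost of a heavy substitution step; your argument treats all $i$ uniformly, exhibits the identity explicitly as a sum of $i-1$ instances of \eqref{eq22} plus one skew-symmetry, and concentrates all the Koszul bookkeeping into a single checkable recursion.

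Two caveats. First, the step you defer is not a side issue but the entire content of the lemma's coefficients, so a complete write-up must include it; fortunately it does hold. Seeding with $\mathrm{sign}_i=(-1)^{(|x_i|+|x_{i+1}|)|x_{i-1}|}$, the recursion $\mathrm{sign}_{j-1}=\mathrm{sign}_j(-1)^{|x_{j-1}|(|R_j|+|L_j|)}$ produces, for $2\le j\le i-1$, the exponent $(|x_1|+\cdots+|x_{j-1}|)(|x_j|+\cdots+|x_{i-1}|)+(|x_i|+|x_{i+1}|)(|x_j|+\cdots+|x_{i-2}|)$, and one checks (the cross terms $|x_{j-1}|(|x_j|+\cdots+|x_{i-1}|)$ cancel modulo $2$ at each step) that this reproduces precisely the coefficients printed in the statement, including the degenerate cases $j=i-1$ and $j=2$. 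Second, a small slip: after the cancellations only \emph{one} term of the middle chain survives, namely $\mathrm{sign}_i[R_i,L_{i+1}]=\mathrm{sign}_i[[x_i,x_{i+1}],[x_1,\ldots,x_{i-1}]_l]$ (your $j=2$ term is consumed by the tail of the $j=3$ term), not ``two endpoints.'' Skew-symmetry turns it into $-(-1)^{(|x_i|+|x_{i+1}|)(|x_1|+\cdots+|x_{i-2}|)}[P,[x_i,x_{i+1}]]$, after which your accounting of the four signs --- one from the first step, one from this survivor, two from the last summand --- cancels in pairs exactly as you describe, matching your $i=3$ anchor computation.
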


\begin{proof}
First, we demonstrate  the lemma for $i=3$. Consider the graded Jacoby identity
$$(-1)^{|a||c|}[a,b,c]_l+(-1)^{|b||a|}[b,c,a]_l+(-1)^{|c||b|}[c,a,b]_l=0$$
with $a=[x_1,x_2], ~ b=x_3, ~c=x_4$. Then we get
$$(-1)^{(|x_1|+|x_2|)|x_4|}[[x_1,x_2],x_3,x_4]_l+(-1)^{|x_3|(|x_1|+|x_2|)}[x_3,x_4,[x_1,x_2]]_l+(-1)^{|x_3||x_4|}[x_4,[x_1,x_2],x_3]_l=0.$$
Using the graded Jacoby identity again with $a=x_1,~ b=x_2, ~c=[x_3,x_4]$, gives 
$$(-1)^{|x_1|(|x_3|+|x_4|)}[x_1,x_2,[x_3,x_4]]_l+(-1)^{|x_2||x_1|}[x_2,[x_3,x_4],x_1]_l+(-1)^{(|x_3|+|x_4|)|x_2|}[[x_3,x_4],x_1,x_2]_l=0.$$
By adding the previous two equations, we get 
$$(-1)^{(|x_1|+|x_2|)|x_4|}[[x_1,x_2],x_3,x_4]_l +(-1)^{|x_3||x_4|}[x_4,[x_1,x_2],x_3]_l+(-1)^{(|x_3|+|x_4|)|x_2|}[[x_3,x_4],x_1,x_2]_l $$
	$$+(-1)^{|x_2||x_1|}[x_2,[x_3,x_4],x_1]_l+(-1)^{|x_3|(|x_1|+|x_2|)}[x_3,x_4,[x_1,x_2]]_l+(-1)^{|x_1|(|x_3|+|x_4|)}[x_1,x_2,[x_3,x_4]]_l=0.$$
Now, by graded skew-symmetry, we have 
$$(-1)^{|x_3|(|x_1|+|x_2|)}[x_3,x_4,[x_1,x_2]]_l+(-1)^{|x_1|(|x_3|+|x_4|)}[x_1,x_2,[x_3,x_4]]_l=$$
$$\{(-1)^{|x_1||x_3|}-(-1)^{|x_2||x_4|}\}(-1)^{|x_1||x_4|}[x_1,x_2,[x_3,x_4]]_l=0.$$
Thus the lemma holds for $i=3$. Assume that the lemma is true for some   $i = j \geq 3$, i.e., if we take $a_1,~ a_2,~ \ldots ~a_{j+1}\in L$ then the result holds. Now, by putting $a_1=[x_1,x_2],~ a_2=x_3,~a_3=x_4,~ \ldots ~a_{j+1}=x_{j+2}$, we get
$$	(-1)^{(|x_1|+\ldots+|x_{j}|) |x_{j+2}|}  [[x_1, \ldots, x_{j+1}]_l,x_{j+2}]+(-1)^{|x_{j+1}||x_{j+2}
|}[[x_{j+2},[x_1, \ldots, x_{j}]_l],x_{j+1}]$$

$$+(-1)^{(|x_{i+2}|+|x_{j+1}|)|x_{j}|}[[[x_{j+1},x_{j+2}]_r,[x_1, \ldots, x_{j-1}]_l],x_{j}]$$

$$+(-1)^{(|x_1|+ \ldots+ |x_{j-1}|)|x_{j}|}[[[x_{j},x_{j+1},x_{j+2}]_r,[x_1, \ldots, x_{j-2}]_l],x_{j-1}]$$

$$+(-1)^{(|x_1|+ \ldots+ |x_{j-2}|)(|x_{j-1}|+|x_{j}|)+(|x_{j+2}|+|x_{j+1}|)|x_{j-1}|}[[[x_{j-1},x_{j},x_{j+1},x_{j+2}]_r,[x_1, \ldots, x_{j-3}]_l],x_{j-2}]$$

$$+(-1)^{(|x_1|+ \ldots+ |x_{j-3}|)(|x_{j-2}|+|x_{j-1}|+|x_{j}|)+(|x_{j+2}|+|x_{j+1}|)(|x_{j-2}|+|x_{j-1}|)}[[[x_{j-2},\ldots,x_{j+2}]_r,[x_1,, \ldots, x_{j-4}]_l],x_{j-3}]$$

$$+\ldots+(-1)^{(|x_1|+|x_2|+|x_3|)(|x_4|+|x_5|+\ldots+x_j)+(|x_{j+2}|+|x_{j+1}|)(|x_4|+\ldots+|x_{j-1}|)}[[x_4, \ldots, x_{j+2}]_r,[x_1,x_2]_l,x_3]$$

$$+(-1)^{(|x_1|+|x_2|)(|x_3|+ \ldots+ |x_{j}|)+(|x_{j+2}|+|x_{j+1}|)(|x_3|+\ldots|x_{j-1}|)}[[x_3, \ldots, x_{j+2}]_r,[x_1,x_2]]$$

$$+\{(-1)^{(|x_1|+ \ldots+ |x_{j-1}|)|x_{j+1}|}-(-1)^{|x_{j}||x_{j+2}|}\}(-1)^{(|x_1|+ \ldots+ |x_{j-1}|)|x_{j+2}|}[[x_1, \ldots, x_{j}]_l,[x_{j+1},x_{j+2}]]=0.$$
Applying the Equation \ref{eq22}, on the penultimate term in the previous equation  we have
$$[[x_3, \ldots, x_{j+2}]_r,[x_1,x_2]]=[ [[x_3,\ldots, x_{j+2}]_r,x_1],x_2]]+(-1)^{|x_2|(|x_1|+|x_3|+|x_4|+ \ldots+ |x_{j+2}|)}[[x_2,[x_3,\ldots,x_{j+2}]]_r,x_1].$$
We can see that the lemma is true for $j+1$ by applying this to the preceding equation. Thus the proof follows from induction. 
\end{proof}

\begin{corollary}\label{cor3.2}
Let $\{x_1, x_2, \ldots, x_i, x_{i+1}\}$ be the minimal generating set of the Lie
superalgebra $L$ with $i \geq 2$. Then
$$\phi_{i}(x_1,x_2,\ldots,x_{i+1}):=		(-1)^{(|x_1|+...+|x_{i-1}|) |x_{i+1}|}  \overline{[x_1, \ldots, x_{i}]_l}\otimes \overline{x_{i+1}}$$ 
$$+(-1)^{|x_{i+1}||x_{i}|}\overline{[x_{i+1},[x_1, \ldots, x_{i-1}]_l]}\otimes \overline{x_{i}}$$
$$+(-1)^{(|x_{i+1}|+|x_{i}|)|x_{i-1}|}\overline{[[x_{i},x_{i+1}]_r,[x_1, \ldots, x_{i-2}]_l]}\otimes \overline{x_{i-1}}$$
$$+(-1)^{(|x_1|+ \ldots+ |x_{i-2}|)|x_{i-1}|}\overline{[[x_{i-1},x_{i},x_{i+1}]_r,[x_1, \ldots, x_{i-3}]_l]}\otimes \overline{x_{i-2}}$$
\begin{align*}
	+(-1)^{(|x_1|+ \ldots+ |x_{i-3}|)(|x_{i-2}|+|x_{i-1}|)+(|x_{i+1}|+|x_{i}|)|x_{i-2}|}&\overline{[[x_{i-2},x_{i-1},x_{i},x_{i+1}]_r,}\\
	&\overline{[x_1, \ldots, x_{i-4}]_l]}\otimes \overline{x_{i-3}}
	\end{align*}
\begin{align*}
+(-1)^{(|x_1|+ \ldots+ |x_{i-4}|)(|x_{i-3}|+|x_{i-2}|+|x_{i-1}|)+(|x_{i+1}|+|x_{i}|)(|x_{i-3}|+|x_{i-2}|)}&\overline{[[x_{i-3},\ldots ,x_{i+1}]_r,}\\
	&\overline{[x_1, \ldots, x_{i-5}]_l]}\otimes \overline{x_{i-4}}
\end{align*}
$$+\ldots+(-1)^{|x_1|(|x_2|+ \ldots+ |x_{i-1}|)+(|x_{i+1}|+|x_{i}|)(|x_2|+\ldots|x_{i-2}|)}\overline{[x_2,, \ldots, x_{i+1}]_r}\otimes \overline{x_1} \in ker (\lambda_{i}).$$
\end{corollary}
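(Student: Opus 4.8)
The plan is to evaluate $\lambda_i$ directly on $\phi_i$ and recognise the output through Lemma~\ref{lem3.1}. Recall that $\lambda_i$ is the commutator map: on a simple tensor it sends $\overline{u}\otimes\overline{v}$, with $u\in\gamma_i(L)$ and $v\in L$ lifted to $\widehat u\in\gamma_i(F)$ and $\widehat v\in F$, to the class of $[\widehat u,\widehat v]$ in $\frac{[\gamma_i(F)+R,F]}{[\gamma_{i+1}(F)+R,F]}$. Every summand of $\phi_i$ has the shape $\overline{B}\otimes\overline{x_k}$, where $B$ is an iterated bracket of weight $i$ in the generators and $x_k$ is the one remaining generator; so applying $\lambda_i$ term by term simply replaces each $\overline{B}\otimes\overline{x_k}$ by the class of the full iterated bracket $[B,x_k]$, the prefactors being carried along unchanged. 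For $i=2$ this produces exactly the three-term graded Jacobi expression, which vanishes outright, so the case $i=2$ is immediate and the real content is $i\geq 3$.

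For $i\geq 3$ I would next observe that these classes are precisely the summands on the left-hand side of the identity in Lemma~\ref{lem3.1}, read off in the same order and with the same signs, with a single exception: the very last summand of that identity, carrying the coefficient $\{(-1)^{(|x_1|+\cdots+|x_{i-2}|)|x_i|}-(-1)^{|x_{i-1}||x_{i+1}|}\}(-1)^{(|x_1|+\cdots+|x_{i-2}|)|x_{i+1}|}$, is absent from $\phi_i$ because its bracketing $[[x_1,\dots,x_{i-1}]_l,[x_i,x_{i+1}]]$ is not of the form (weight-$i$ element)$\,\otimes\,$(generator) and so is not a legitimate simple tensor in the domain of $\lambda_i$. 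Since the identity of Lemma~\ref{lem3.1} is a purely formal consequence of the graded Jacobi identity and graded skew-symmetry, it holds verbatim in the free Lie superalgebra $F$. Hence $\lambda_i(\phi_i)$ equals the class of the negative of that missing last summand, namely a fixed scalar multiple of $[[x_1,\dots,x_{i-1}]_l,[x_i,x_{i+1}]]$.

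To conclude, I must show that this remaining term represents $0$, i.e.\ that $[[x_1,\dots,x_{i-1}]_l,[x_i,x_{i+1}]]$ lies in the denominator $[\gamma_{i+1}(F)+R,F]$. Here $[x_i,x_{i+1}]\in\gamma_2(F)$ and $[x_1,\dots,x_{i-1}]_l\in\gamma_{i-1}(F)$, and the plan is to use the reformulated graded Jacobi identity (\ref{eq22}) to push the inner bracket $[x_i,x_{i+1}]$ outward and absorb it, together with the relations gathered in $R$, into $[\gamma_{i+1}(F)+R,F]$; the base instance $i=3$ reduces to recombining the two graded Jacobi expansions used in the proof of Lemma~\ref{lem3.1} and applying graded skew-symmetry, and the general step is then furnished by Lemma~\ref{lem3.1} itself. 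I expect this final containment to be the main obstacle, because it is exactly where the super-signs intervene: in the purely even (Lie algebra) case the bracketed coefficient displayed above is identically zero and the term disappears for free, whereas in the graded setting the coefficient can be nonzero and one must argue directly that this weight-$(i+1)$ commutator is trivial modulo $[\gamma_{i+1}(F)+R,F]$.
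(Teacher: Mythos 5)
You have reconstructed exactly the route the paper takes: evaluate $\lambda_i$ termwise, invoke Lemma~\ref{lem3.1} in the free Lie superalgebra $F$ to conclude that $\lambda_i(\phi_i)$ is the class of $-c\,[[x_1,\ldots,x_{i-1}]_l,[x_i,x_{i+1}]]$ with
$c=\bigl\{(-1)^{(|x_1|+\cdots+|x_{i-2}|)|x_i|}-(-1)^{|x_{i-1}||x_{i+1}|}\bigr\}(-1)^{(|x_1|+\cdots+|x_{i-2}|)|x_{i+1}|}$,
and dispose of $i=2$ by the graded Jacobi identity; all of that matches the paper. The gap is the final containment you yourself flag, and your plan for it cannot be carried out. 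Pushing the inner bracket $[x_i,x_{i+1}]$ outward with identity~(\ref{eq22}) only rewrites the element as a combination of terms lying in $[\gamma_i(F),F]$ --- that is the \emph{numerator} of the quotient, not its denominator $[\gamma_{i+1}(F)+R,F]=\gamma_{i+2}(F)+[R,F]$. Whether a given element of weight $i+1$ lies in $\gamma_{i+2}(F)+[R,F]$ is not a formal consequence of the graded Jacobi identity (nor of Lemma~\ref{lem3.1}, which is an identity in $F$ and says nothing about $R$): it depends on the relations of the particular $L$.

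In fact the containment is false in general, so no argument of the kind you sketch can close it. Take $i=3$, let $F$ be free on $x_1,x_2,x_3,x_4$ with $x_1,x_3$ odd and $x_2,x_4$ even, and let $L=F/\gamma_4(F)$, so that $R=\gamma_4(F)$ and $[\gamma_4(F)+R,F]=\gamma_5(F)$. Here $c=\bigl\{(-1)^{|x_1||x_3|}-(-1)^{|x_2||x_4|}\bigr\}(-1)^{|x_1||x_4|}=-2\neq 0$, and a direct computation (Terms $1{+}2$ give $[a,b]$ and Terms $3{+}4$ give $[b,a]=[a,b]$, where $a=[x_1,x_2]$, $b=[x_3,x_4]$ are both odd) shows the four brackets corresponding to the summands of $\phi_3$ add up to $2[[x_1,x_2],[x_3,x_4]]$. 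Expanding in the enveloping algebra shows $[[x_1,x_2],[x_3,x_4]]$ is a nonzero element of weight $4$, hence not in $\gamma_5(F)$; therefore $\lambda_3(\phi_3)\neq 0$ and $\phi_3\notin\ker(\lambda_3)$ for this $L$. So the step is not merely the ``main obstacle'': as stated, for arbitrary parities and orderings of the generators, it fails, and it can only be rescued when the scalar $c$ vanishes (as it always does in the purely even case, which is why Rai's Lie-algebra argument, the template here, goes through). You should also know that the paper's own proof does not fill this hole: after invoking Lemma~\ref{lem3.1} it simply declares the surviving term to be in $\ker(\lambda_i)$, in a display that equates a tensor with a bracket class and offers no reason why that class dies in the quotient. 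Your proposal therefore mirrors the paper's argument faithfully and is more honest about its weak point, but neither your sketch nor the paper's text constitutes a proof of the corollary in the stated generality.
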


\begin{proof}
By definition of $\lambda_2$, the result is true for $i=2$. Now, for $i\geq3$, we have from Lemma \ref{lem3.1} 
\begin{align*}
	\phi_{i}(x_1,x_2,\ldots,x_{i+1})&=\\ +\{(-1)^{(|x_1|+ \ldots+ |x_{i-2}|)|x_{i}|}-(-1)^{|x_{i-1}||x_{i+1}|}\}&(-1)^{(|x_1|+ \ldots+ |x_{i-2}|)|x_{i+1}|}\overline{[[x_1,\ldots}\overline{\ldots, x_{i-1}]_l,[x_{i},x_{i+1}]]}\\
	& \in \ker (\lambda_{i}).
\end{align*}
\end{proof}

\begin{theorem}
Let $L$ be a $(m\mid n)$ dimensional $c$-step nilpotent Lie superalgebra with $\dim(\gamma_2(L)) =(r\mid s)$ and $r+s\geq1$. Then
$$\dim(\mathcal{M}(L))\leq \frac{1}{2}[(m + n - r - s)(m + n + r + s) + (n - m - r - 3s)]-\sum_{i=2}^{l}(m+n-r-s-i),$$
where $l=\min\{c,m+n-r-s\}.$

\end{theorem}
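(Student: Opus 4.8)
The plan is to apply the recursive dimension formula \eqref{eq2.4} with $\gamma_2(L)$, reducing the problem to two pieces: the Schur multiplier of the abelian quotient $L/\gamma_2(L)$, and a lower bound on the numbers $\dim\ker(\lambda_i)$.

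First I would observe that $L/\gamma_2(L)$ is abelian of superdimension $(m-r\mid n-s)$, so Lemma \ref{th3.3} gives $\dim\mathcal{M}(L/\gamma_2(L)) = \frac{1}{2}\big((m-r)^2+(n-s)^2+(n-s)-(m-r)\big)+(m-r)(n-s)$, which (writing $a=m-r,\ b=n-s$) collapses to $\frac{1}{2}(a+b)^2+\frac{1}{2}(b-a)$. Setting $N:=\dim(L/\gamma_2(L))=m+n-r-s$ (the size of a minimal generating set), I would then add the contribution $\dim(\gamma_2(L))\big(\dim(L/\gamma_2(L))-1\big)=(r+s)(N-1)$ from \eqref{eq2.4} and carry out the purely algebraic simplification. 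The key bookkeeping identity is $N(m+n+r+s)=(m+n)^2-(r+s)^2$, after which one checks that these two contributions equal exactly $\frac{1}{2}\big[N(m+n+r+s)+(n-m-r-3s)\big]$, the leading part of the claimed bound. This step is routine once the abelian formula is in hand.

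The substance of the argument is the estimate $\dim\ker(\lambda_i)\geq N-i$ for $2\leq i\leq l:=\min\{c,N\}$. Here I would fix a minimal generating set $\{x_1,\ldots,x_N\}$, whose images form a basis of $L/\gamma_2(L)$. Since $L$ has class exactly $c$ and $i\leq c$, we have $\gamma_i(L)\neq\gamma_{i+1}(L)$; because $\gamma_i(L)/\gamma_{i+1}(L)$ is spanned by images of left-normed brackets of length $i$ in the generators (the graded Jacobi identity allows left-normalization), I can choose generators with $\overline{[x_1,\ldots,x_i]_l}\neq 0$ in $\gamma_i(L)/\gamma_{i+1}(L)$. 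I would then feed the elements $\phi_i(x_1,\ldots,x_i,x_j)$, one for each generator $x_j$ with $\overline{x_j}\notin\{\overline{x_1},\ldots,\overline{x_i}\}$, into Corollary \ref{cor3.2}, placing each in $\ker(\lambda_i)$. Linear independence follows by projecting onto the $\overline{x_j}$-component of the second tensor factor of $\frac{\gamma_i(L)}{\gamma_{i+1}(L)}\otimes\frac{L}{\gamma_2(L)}$: among all terms of $\phi_i(x_1,\ldots,x_i,x_j)$ the only one whose second factor is $\overline{x_j}$ is the leading term $\pm\,\overline{[x_1,\ldots,x_i]_l}\otimes\overline{x_j}$ (every other term carries a second factor from $\overline{x_1},\ldots,\overline{x_i}$), so these projections are nonzero and lie in distinct summands, whence the elements are independent.

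Combining, \eqref{eq2.4} yields $\dim\mathcal{M}(L)=\frac{1}{2}\big[N(m+n+r+s)+(n-m-r-3s)\big]-\sum_{i=2}^{c}\dim\ker(\lambda_i)$, and since all terms are nonnegative, discarding those with $i>l$ and applying $\dim\ker(\lambda_i)\geq N-i$ for $2\leq i\leq l$ gives the theorem. I expect the main obstacle to be precisely this linear-independence step: because this is the super setting, odd generators may satisfy $[x,x]\neq 0$ and the selected $x_1,\ldots,x_i$ may repeat, so one must argue that repetitions merely shrink $\{\overline{x_1},\ldots,\overline{x_i}\}$ to a set of size $d\leq i$, still leaving $N-d\geq N-i$ admissible choices of $x_j$. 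Care is also needed to confirm that every term of $\phi_i$ genuinely lies in $\frac{\gamma_i(L)}{\gamma_{i+1}(L)}\otimes\frac{L}{\gamma_2(L)}$ (each bracket having total length $i$), so that the projection argument is legitimate.
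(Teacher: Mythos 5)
Your proposal is correct and follows essentially the same route as the paper: the recursion \eqref{eq2.4}, Nayak's formula (Lemma \ref{th3.3}) applied to the abelian quotient $L/\gamma_2(L)$, and the key estimate $\dim\ker(\lambda_i)\geq m+n-r-s-i$ obtained by placing the elements $\phi_i(z_1,\ldots,z_i,y_k)$ of Corollary \ref{cor3.2} into $\ker(\lambda_i)$, one for each generator outside $\{z_1,\ldots,z_i\}$. Your projection-onto-the-second-tensor-factor argument for linear independence, and your handling of possible repetitions among odd generators, merely make explicit what the paper asserts in one line, so the underlying argument is the same.
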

\begin{proof}
Let $U=\{x_1,x_2,\ldots,x_{m-r};x_{m-r+1},\ldots,x_{m+n-r-s}\}$  be the minimal generating set of $L$, and let $i$ be an arbitrary fixed number such that $i\leq \min\{m+n-r-s,c\}=l$. Since $i\leq c $, the space $\gamma_{i}(L)/\gamma_{i+1}(L)$ is non-trivial. As a result, there  is a non-zero element of $\gamma_{i}(L)$, say $z^*=[z_1,z_2,\ldots,z_{i}]$ such that $z^*\notin \gamma_{i+1}(L)$. Since $i\leq m+n-r-s$, there exist $n+m-r-s-i$ elements in the set $U/\{z_1,z_2,\ldots,z_{i}\}$, say $y_k$  for $k=1,\ldots,n+m-r-s-i.$ As $z^*\notin \gamma_{i+1}(L)$ and $y_k \notin \{z_1,z_2,\ldots,z_{i}\}$, we have $\phi_{i}(z_1,z_2,\ldots,z_{i},y_k)\neq 0$. Since $\phi$ is a homomorphism and the set $U$ is  a minimal generating set for $L$, thus the set   $\{\phi_{i}(z_1,z_2,\ldots,z_{i},y_k)~|~ k=1,\ldots, n+m-r-s-i\}$ is a linearly independent set. From Corollary \ref{cor3.2}, we have Im$(\phi_{i})\subseteq {\rm{ker}} (\lambda_{i}) $, and  ${\rm{ker}} (\lambda_{i})\neq 0$ for all $2\leq i\leq c$. Thus, we have  
	\begin{equation}\label{eq3.1}
		\dim (ker(\lambda_{i}))\geq n+m-r-s-i.
	\end{equation}

Now using (\ref{eq3.1}) in (\ref{eq2.4}), we get 
\begin{align*}
\dim(\mathcal{M}(L)) &= \dim(\mathcal{M}(L/\gamma_2(L)))  + \dim(\gamma_2(L))(\dim (\frac{L}{\gamma_2(L)})-1) - \sum_{i=2}^{c}\dim (ker (\lambda_{i}))\\
&\leq\dim(\mathcal{M}(L/\gamma_2(L)))  + \dim(\gamma_2(L))(\dim (\frac{L}{\gamma_2(L)})-1) - \sum_{i=2}^{l}(m+n-r-s-i).
\end{align*}
By using Lemma \ref{th3.3}, 
\begin{align*}
\dim(\mathcal{M}(L/\gamma_2(L)))&=\frac{1}{2}[(m+n-r-s)^2+n-s-m+r]\\
&=\frac{1}{2}[(m+n-r-s)^2+2(n-s)-(m+n-r-s)]\\
&=\frac{1}{2}[(m+n-r-s)(m+n-r-s-1)+2(n-s)].
\end{align*}
Therefore, 
\begin{align*}
	\dim(\mathcal{M}(L))\leq
	&~\frac{1}{2}[(m+n-r-s)(m+n-r-s-1)+2(n-s)]+(r+s)(m+n-r-s-1) \\ &-\sum_{i=2}^{l}(m+n-r-s-i)\\
	&\leq\frac{1}{2}[(m+n-r-s-1)(m+n+r+s)+2(n-s)]- \sum_{i=2}^{l}(m+n-r-s-i)\\
	&\leq \frac{1}{2}[(m + n - r - s)(m + n + r + s) + (n - m - r - 3s)]-\sum_{i=2}^{l}(m+n-r-s-i).
\end{align*}
\end{proof}

\textbf{Disclosure statement:} All authors declare that they have no conflicts of interest that are relevant to the content of this article.


\begin{thebibliography}{1}\label{reference}



%
%



%
%






%
%
%
%
%


\bibitem{Kac1977}
Kac, V. G. (1977).
\newblock  Lie superalgebras. 
\newblock {\em Adv. Math.} 26(1): 8-96. DOI: 10.1016/0001-8708(77)90017-2





\bibitem{GKL2015} 
Garc\'{i}a-Mart\'{i}nez, X., Khmaladze, E., Ladra, M. (2015).
\newblock Non-abelian tensor product and homology of Lie superalgebras.
\newblock {\em J. Algebra.} 440: 464-488. DOI: 10.1016/j.jalgebra.2015.05.027








\bibitem{Musson2012}
Musson, I. (2012).
\newblock {\em Lie  superalgebras  and  enveloping  algebras}.
Graduate Studies in
Mathematics, Vol. 131. Milwaukee, Amer.Math.Soc.









\bibitem{Ellis1991}
Ellis, G. (1991).
\newblock  A non-abelian tensor products of Lie algebras.
\newblock {\em Glasg. Math. J}.  33(1): 101-120. DOI: 10.1017/S0017089500008107





\bibitem{hasanpadprad}
Padhan, R. N., Hasan, I. Y., Pradhan, S. S. (2022).
\newblock On the dimension of non-abelian tensor square of Lie superalgebras.
arXiv preprint, arXiv:2209.09514 






























\bibitem{p54} 
Padhan. R. N., Pati, K. C. (2020 ).  
\newblock {On $2$-nilpotent multiplier of Lie superalgebras}.
\newblock arXiv preprint, arXiv.2006.10970. 

\bibitem{p55}
Hasan, I. Y., Padhan, R. N., Das, M. (2022).
\newblock Detecting capable pairs of some nilpotent Lie superalgebras.
\newblock {\em 	Indian J. Pure Appl. Math.}
Advance online publication. DOI: 10.1007/s13226-022-00348-0





\bibitem{ph57}
Hasan, I. Y., Padhan, R. N. (2022).
\newblock On the triple tensor product of generalized Heisenberg Lie superalgebra of rank $\leq2$.
\newblock arXiv preprint, arXiv:2210.00254


\bibitem{Nayak2018} 
Nayak, S. (2019).
\newblock Multipliers of nilpotent Lie superalgebras.
\newblock {\em Comm. Algebra}. 47(2): 689-705. DOI: 10.1080/00927872.2018.1492595




\bibitem{p50}
Padhan, R. N., Nayak, S. (2022).
\newblock  On capability and the Schur multipliers of some nilpotent Lie superalgebras.
\newblock {\em Linear Multilinear }. 70(8): 1467-1478. DOI: 10.1080/03081087.2020.1764902






\bibitem{Padhandetec}
Padhan, R. N., Nayak, S., Pati, K. C. (2021).
\newblock  Detecting Capable Lie superalgebras.
\newblock {\em Comm. Algebra}. 49(10): 4274-4290. DOI: 10.1080/00927872.2021.1918135



\bibitem{hesam}
Safa, H. (2021).
\newblock On multipliers of pairs of Lie superalgebras.
\newblock {Quaestiones Mathematicae.} 45(12): 1-12. DOI: 10.2989/16073606.2021.1978010 







\bibitem{Kar1987}
Karpilovsky, G. (1987.)
\newblock {\em The Schur Multiplier }. 
\newblock  London. DC: London Math. Soc,  Clarendon Press.




\bibitem{Moneyhun1994}
Moneyhun, K. (1991).
\newblock Isoclinisms in Lie algebras. PhD dissertation.  North Carolina State University, North Carolina

\bibitem{Batten1993} 
Batten, P. (1993).
\newblock  Multipliers and covers of Lie algebras. PhD dissertation.
\newblock  North Carolina State University, North Carolina 



\bibitem{Batten1996}
Batten, P., Moneyhun, K., Stitzinger, E. (1996).
\newblock On characterizing nilpotent Lie algebra by their multipliers.
\newblock {\em Comm. Algebra}. 24(14): 4319-4330. DOI: 10.1080/00927879608825817



\bibitem{BS1996} 
Batten, P., Stitzinger, E. (1996).
\newblock On covers of Lie algebras.
\newblock  {\em Comm. Algebra}.  24(14): 4301-4317. DOI: 10.1080/00927879608825816






\bibitem{Ellis1995}
Ellis, G. (1995).
\newblock Tensor products and $q$-cross modules.
\newblock {\em J. London Math. Soc}. 51(2): 241-258. DOI: 10.1112/jlms/51.2.243


\bibitem{Ellis1996}
Ellis, G. (1996).
\newblock  Capability, homology and central series of a pair of groups.
\newblock {\em J. Algebra}. 179(1): 31-46. DOI: 10.1006/jabr.1996.0002





















\bibitem{Hardy1998}
Hardy, P., Stitzinger, E. (1998).
\newblock On characterizing nilpotent Lie algebras by their multipliers $t(L)=3, 4, 5, 6$.
\newblock {\em Comm. Algebra }.  26(11): 3527-3539. DOI: 10.1080/00927879808826357






\bibitem{Hardy2005}
Hardy, P. (2005).
\newblock On characterizing nilpotent Lie algebras by their multipliers III.
\newblock {\em Comm. Algebra }. 33(11): 4205-4210. DOI: 10.1080/00927870500261512


\bibitem{org}
Johari, F., Parvizi, M., Niroomand, P. (2017). 
\newblock  Capability and Schur multiplier of a pair of Lie algebras.
\newblock {\em Journal of Geometry and Physics}. 114: 184-196. DOI: 10.1016/j.geomphys.2016.11.016 





\bibitem{N2}
Niroomand, P. (2011).
\newblock On the dimension of the Schur multiplier of nilpotent Lie algebras.
\newblock {\em Cent. Eur. J. Math}.  9(1): 57-64. DOI: 10.2478/s11533-010-0079-3










\bibitem{Rai}
Rai, P. K. (2019).
\newblock On the dimension of the Schur multiplier of nilpotent lie
algebras.
\newblock {\em Comm. Algebra}. 47(10):  3982-3986. DOI: 10.1080/00927872.2019.1572177







\bibitem{Niroomand2011}
Niroomand, P., Russo, F. G. (2011). 
\newblock  A note on the Schur multiplier of a nilpotent Lie algebra.
\newblock {\em Comm. Algebra}. 39(4): 1293-1297. DOI: 10.1080/00927871003652660



\bibitem{Russo2011}
Niroomand, P., Russo, F. G. (2011). 
\newblock  A restriction on the Schur multiplier of a nilpotent Lie algebras.
\newblock {\em Electron. J. Linear Algebra}. 22(1): 1-9. DOI: 10.13001/1081-3810.1423


\bibitem{PMF2013}
Niroomand, P., Parvizi, M., Russo, F. G. (2013). 
\newblock  Some criteria for detecting capable Lie algebras.
\newblock {\em J. Algebra}. 384: 36-44. DOI: 10.1016/j.jalgebra.2013.02.033




































































































































%
%
%
%


































































































































































\end{thebibliography}
\end{document}